\documentclass[a4paper,12pt]{amsart}

\usepackage{amsfonts}
\usepackage{amsmath}
\usepackage{amssymb}
\usepackage{graphicx}
\usepackage{mathrsfs}
\usepackage{hyperref}

\setlength{\textwidth}{16.2cm}
\setlength{\textheight}{22.7cm}
\setlength{\topmargin}{3mm}
\setlength{\oddsidemargin}{3mm}
\setlength{\evensidemargin}{3mm}


\theoremstyle{plain}
 \newtheorem{theorem}{Theorem}[section]
 
 \newtheorem{lemma}{Lemma}[section]
 \newtheorem{property}{Property}[section]
 \newtheorem{corollary}{Corollary}[section]
\theoremstyle{definition}
 
 \newtheorem{definition}{Definition}[section]
\theoremstyle{remark}
 
 \numberwithin{equation}{section}

\title[Maximum principle and its application]{Maximum principle and its application for the nonlinear time-fractional diffusion equations with Cauchy-Dirichlet conditions}

\subjclass[2010]{Primary 26A33}

\keywords{sub-diffusion equation, maximum principle, Atangana-Baleanu derivative,
fractional differential equation, nonlinear problem, Riemann--Liouville derivative.}

\author[Borikhanov]{\bfseries Meiirkhan~Borikhanov}

\author[Kirane]{\bfseries Mokhtar~Kirane}

\author[Torebek]{\bfseries Berikbol T. Torebek$^{\text{*}}$}

\address{Meiirkhan~Borikhanov Al--Farabi Kazakh National University, Al--Farabi ave. 71, 050040, Almaty, Kazakhstan}
\email{meeir0808@gmail.com}

\address{Mokhtar~Kirane LaSIE, Facult\'{e} des Sciences, Pole Sciences et Technologies, Universit\'{e} de La Rochelle, Avenue M. Crepeau, 17042 La Rochelle Cedex, France \newline NAAM Research Group, Department of Mathematics, Faculty of Science, King Abdulaziz University, P.O. Box 80203, Jeddah 21589, Saudi Arabia \newline RUDN University, 6 Miklukho-Maklay St, Moscow 117198, Russia}
\email{mkirane@univ-lr.fr}

\address{Berikbol T. Torebek \newline
Institute of Mathematics and Mathematical Modeling. 125 Pushkin str., 050010 Almaty, Kazakhstan \newline Al--Farabi Kazakh National University, Al--Farabi ave. 71, 050040, Almaty, Kazakhstan}
\email{torebek@math.kz, btorebek@mail.kz}

\thanks{$\text{*}$ Corresponding author. E-mail: torebek@math.kz, btorebek@mail.kz}

\sloppy

\begin{document}

\vspace{18mm} \setcounter{page}{1} \thispagestyle{empty}

\begin{abstract}
In this paper, a maximum principle for the one-dimensional sub-diffusion equation with Atangana-Baleanu fractional derivative is formulated and proved. The proof of the maximum principle is based on an extremum principle for the Atangana-Baleanu fractional derivative that is given in the paper, too. The maximum principle is then applied to show that the initial-boundary-value problem for the linear and nonlinear time-fractional diffusion equations possesses at most one classical solution and this solution continuously depends on the initial and boundary conditions.
\end{abstract}

\maketitle
\section{Introduction and statement of problem}
In  this paper, we  consider the nonlinear time-fractional diffusion equation
\begin{equation}\label{1.1}\frac{\partial{u}}{\partial t}(x,t)=\frac{{{\partial }^{2}}}{\partial {{x}^{2}}}D_{*t}^{1-\alpha }u(x,t)+F\left( x,t, u\right) \text{ }in\text{ } \left( 0,a \right)\times \left( 0,T \right]=\Omega,\end{equation}
with the following nonhomogeneous Cauchy-Dirichlet conditions
\begin{equation}\label{1.2}\left\{\begin{array}{l}
u\left( x,0 \right)=\varphi ( x ),\,x \in\left[ 0,a \right],\\{}\\
u\left( 0,t \right)=\lambda(t),\,\, u\left( a,t \right)=\mu(t),\,  0\leq t\le T, \end{array}\right.
\end{equation}
where the functions $F\left( x,t, u\right),\varphi \left( x \right),\lambda \left( t \right),\mu \left( t \right)$ are continuous and ${\lambda }\left( t \right),{\mu }\left( t \right)$ are nondecreasing functions, $D_{*t}^{\alpha}$ is the Atangana-Baleanu fractional derivative (see Section \ref{S2}).

The aim of this paper is to research the maximum principle for the nonlinear fractional diffusion equation \eqref{1.1}. We first introduce some recent related works. Maximum principles were given in \cite{Y. Luchko1, Y. Luchko2, M. Al-Refai, M. Al-Refai1, Mohammed Al-Refai} for the types of fractional diffusion equations different from \eqref{1.1}. For the maximum principles given in \cite{M. Al-Refai} to hold, existence of a regular solution (with existence of a solutions ${{u}_{t}}$ on the closed time interval $\left[ 0,T \right]$) is assumed. In \cite{M. Al-Refai1}, the assumption of a solution with existence of a continuous ${{u}_{t}}$ in $\left( 0,T \right]$ such that ${{u}_{t}}\in {{L}^{1}}\left([ 0,T ]\right)$ is made. In \cite{Alsaedi} Ahmad, Alsaedi and Kirane studied three types of fractional diffusion equations. For each type, they obtained an upper bound of the Chebyshev norm in terms of the integral of the solution.

In \cite{Chan} Chan and Lui was considered a maximum principle for the equation \eqref{1.1} where  Riemann-Liouville derivative is considered rather that instead Atangana-Baleanu derivative \cite{Atangana, Atangana1, Baleanu}.

If $\alpha \to 0$ then equation \eqref{1.1} by Property \ref{p7} coincides with the classical heat equation. The equation of the form \eqref{1.1} with fractional derivatives with respect to the time variable is called the sub-diffusion equation \cite{Uchaikin}. This equation describes the slow diffusion.

The nonlinear problem \eqref{1.1} and \eqref{1.2} having a solution implies ${{u}_{t}}\left( x,t \right)$ exists. Thus for any $0<\alpha <1.$ $D_{t}^{1-\alpha }u\left( x,t \right)$ exists for $t>0.$ Hence, a solution $u\left( x,t \right)$ of the problem \eqref{1.1} and \eqref{1.2} in the region $\left[ 0,a \right]\times \left[ 0,T \right]$ is a (classical) solution in $C\left( \left[ 0,a \right]\times \left[ 0,T \right] \right)\cap {{C}^{2,1}}\left( \left( 0,a \right)\times \left( 0,T \right] \right).$

\section{Some definitions and properties of fractional operators}\label{S2}

In this section, we recall some basic definitions and properties of the fractional derivative operators.

\begin{definition}\label{d1} \cite{Kilbas} Let $f\in L_{loc}^1([a,b]),$ where $-\infty\leq a<t<b\leq+\infty$ be a locally integrable real-valued function. The Riemann--Liouville fractional integral $I^\alpha$ of order $\alpha\in\mathbb R$ ($\alpha>0$) is defined as
$$
I^\alpha  f\left( t \right) = \left(f*K_{\alpha}\right)(t) ={\rm{
}}\frac{1}{{\Gamma \left( \alpha \right)}}\int\limits_a^t {\left(
{t - s} \right)^{\alpha  - 1} f\left( s \right)} ds,$$
where $K_{\alpha}=\frac{t^{\alpha-1}}{\Gamma(\alpha)},$ $\Gamma$ denotes the Euler gamma function.
\end{definition}

\begin{definition}\cite{Kilbas} Let $f\in L^1([a,b]),$ $-\infty\leq a<t<b\leq+\infty$ and $f*K_{1-\alpha}\in W^1_{2}([a,b]), 0<\alpha<1$ where $W^{1}_2([a,b])$ is the Sobolev space. The Riemann--Liouville fractional derivative $D^\alpha$ of order $\alpha$ is defined as
$$D^\alpha f \left( t \right) = \frac{{d}}{{dt}}I^{1 - \alpha } f \left( t \right)={\rm{}}\frac{1}{{\Gamma \left( 1-\alpha \right)}}\frac{d}{dt}\int\limits_a^t {\left({t - s} \right)^{-\alpha} f\left( s \right)} ds.$$
\end{definition}

\begin{definition}\label{d2}\cite{Atangana} Let $0<\alpha <1$ and $f\in {{W}^{1}_2}\left([ a,b ] \right).$ The Atangana-Baleanu fractional derivative of order $\alpha $ is defined by

$${{D}_{*}}^{\alpha } f\left( t \right)=\frac{M(\alpha)}{1-\alpha }\int\limits_{a}^{t}{{f}'\left( s \right){{E}_{\alpha ,1}}\left[ -\alpha \frac{{{\left( t-s \right)}^{\alpha }}}{1-\alpha } \right]}ds,$$ where $M(\alpha)$ denotes a normalization function obeying $M(0) = M(1) = 1$ and $E_{\alpha,\beta} \left( z\right)$ is a Mittag-Leffler function $E_{\alpha,\beta} \left( z
\right)=\sum\limits_{k=0}^{\infty}\frac{z^k}{\Gamma(\alpha k+\beta)}.$
\end{definition}

\begin{definition}\label{d4}\cite{Atangana} Let $\alpha \ge 0$ and $f$ be the integrable function on $\left[ a,b \right].$ The Atangana-Baleanu fractional integral of order $\alpha $ is defined by
$${I}_{*}^{\alpha }\left[ f\left( t \right) \right]=\frac{1-\alpha}{M(\alpha)}f(t)+\frac{\alpha}{M(\alpha)}{I}^{\alpha }f(t).$$
\end{definition}
For convenience, in what follows we assume that $M(\alpha)=1.$
\begin{property}\label{p5} \cite{Atangana}Let $f\left( t \right)\in {{C}^{1}}\left([ a,b ]\right)$ and $\alpha \in \left( 0,1 \right),$ it holds
$${I}_{*}^{\alpha }{D}_{*}^{\alpha }\left[ f\left( t \right) \right]=f(t)-f(a).$$
\end{property}

\begin{property}\label{p6} If $f\left( t \right)\in {{C}^{1}}\left([ a,b ]\right)$, then it holds

\begin{align*}D_{*}^{\alpha }f\left( t \right)&=\frac{1}{1-\alpha }\left( f\left( t \right)-{{E}_{\alpha ,1}}\left[ -\alpha \frac{{{t}^{\alpha }}}{1-\alpha } \right]f\left( 0 \right) \right)\\&-\frac{\alpha }{{{\left( 1-\alpha  \right)}^{2}}}\int\limits_{0}^{t}{f\left( \tau  \right){{\left( t-\tau  \right)}^{\alpha -1}}{{E}_{\alpha ,\alpha }}\left[ -\alpha \frac{{{\left( t-\tau  \right)}^{\alpha }}}{1-\alpha } \right]}d\tau.\end{align*}
\end{property}
The property \ref{p6} is proved by applying integration by parts.

\begin{property}\label{p7} If $\alpha \to 0$, then  $$D_{*}^{\alpha }u\left( t \right)\to u\left( t \right)-u\left( 0 \right).$$
\end{property}

\begin{lemma}\label{l7.0}\cite{Mohammed Al-Refai}
Let $f\in {{C}^{1}}\left([ 0, T]\right)$ attain its maximum at ${{t}_{0}}\in \left( 0,\text{ }T \right)$, then $$D^{\alpha }f\left( {{t}_{0}} \right)\ge \frac{t_{0}^{-\alpha }}{\Gamma \left( 1-\alpha  \right)}f\left( {{t}_{0}} \right),\text{ }for\text{ }all\text{ }0<\alpha <1.$$
If, $f\left( {{t}_{0}} \right)\ge 0$, then $D^{\alpha }f\left( {{t}_{0}} \right)\ge 0$.
\end{lemma}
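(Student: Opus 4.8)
The plan is to reduce the claim to a one‑sided sign estimate for the Riemann--Liouville derivative of a nonnegative function that vanishes at the extremum point. First I would set $g(t):=f(t_{0})-f(t)$. Then $g\in C^{1}([0,T])$, $g\ge 0$ on $[0,T]$, $g(t_{0})=0$, and since $t_{0}$ is an interior maximum of $f$ we also have $g'(t_{0})=-f'(t_{0})=0$. Using linearity of $D^{\alpha}$ together with the elementary identity $D^{\alpha}c=c\,t^{-\alpha}/\Gamma(1-\alpha)$ for a constant $c$, evaluated at $t=t_{0}$, one obtains
\[
D^{\alpha}f(t_{0})=\frac{t_{0}^{-\alpha}}{\Gamma(1-\alpha)}\,f(t_{0})-D^{\alpha}g(t_{0}),
\]
so it is enough to prove $D^{\alpha}g(t_{0})\le 0$.

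To see the sign of $D^{\alpha}g(t_{0})$, I would rewrite $I^{1-\alpha}g(t)=\frac{1}{\Gamma(1-\alpha)}\int_{0}^{t}\tau^{-\alpha}g(t-\tau)\,d\tau$ and differentiate under the integral sign; since $\alpha<1$ and $g'$ is bounded this is legitimate and gives the Caputo‑type representation
\[
D^{\alpha}g(t_{0})=\frac{1}{\Gamma(1-\alpha)}\left(t_{0}^{-\alpha}g(0)+\int_{0}^{t_{0}}(t_{0}-s)^{-\alpha}g'(s)\,ds\right).
\]
Then I would integrate the last integral by parts, moving the derivative off $g'$ and onto the kernel $(t_{0}-s)^{-\alpha}$. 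The endpoint term at $s=0$ produces $-t_{0}^{-\alpha}g(0)$, cancelling the first summand, while the endpoint term at $s=t_{0}$ vanishes: from $g(t_{0})=g'(t_{0})=0$ and $g\in C^{1}$ one gets $g(s)=o(t_{0}-s)$ as $s\uparrow t_{0}$, hence $(t_{0}-s)^{-\alpha}g(s)=o\big((t_{0}-s)^{1-\alpha}\big)\to 0$. What survives is
\[
D^{\alpha}g(t_{0})=-\frac{\alpha}{\Gamma(1-\alpha)}\int_{0}^{t_{0}}(t_{0}-s)^{-\alpha-1}g(s)\,ds\le 0,
\]
the integral being convergent (again by $g(s)=o(t_{0}-s)$ near $t_{0}$, since $\alpha<1$) with nonnegative integrand.

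Combining the two displays yields $D^{\alpha}f(t_{0})\ge \frac{t_{0}^{-\alpha}}{\Gamma(1-\alpha)}f(t_{0})$; and if in addition $f(t_{0})\ge 0$, then the right‑hand side is nonnegative because $t_{0}>0$ and $\Gamma(1-\alpha)>0$ for $0<\alpha<1$, so $D^{\alpha}f(t_{0})\ge 0$. The main point requiring care is the integration by parts: one has only $g\in C^{1}$, so a second integration by parts is not available, and the vanishing of the $s=t_{0}$ boundary term as well as the convergence of the resulting improper integral must be deduced solely from $g(t_{0})=g'(t_{0})=0$ and the continuity of $g'$; this is cleanly handled by cutting the integral at $t_{0}-\varepsilon$, integrating by parts on $[0,t_{0}-\varepsilon]$, and letting $\varepsilon\to 0^{+}$.
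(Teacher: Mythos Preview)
Your argument is correct. The paper does not actually prove this lemma---it is quoted from \cite{Mohammed Al-Refai}---but your approach via the auxiliary function $g(t)=f(t_{0})-f(t)$, the Caputo representation of $D^{\alpha}g$, and a single integration by parts (with the $\varepsilon$-cutoff justified by $g(t_{0})=g'(t_{0})=0$) is precisely the standard argument from that reference, and it is the same device the paper itself employs in its proof of the Atangana--Baleanu analogue, Lemma~\ref{l7}.
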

Analogous result for the fractional derivatives at absolute minimum points are obtained by applying the above result on $-f(t)$.

\begin{lemma}\label{l7}Let a function $f\left( t \right)\in {{C}^{1}}\left([0, T ]\right)$. Assume that ${f}'\left( t \right)$ exists and is continuous for $t\in \left[ 0,T \right].$\\
a)	If $f\left( t \right)$ attains its maximum value over $\left[ 0,T \right]$ at a point ${{t}_{0}}\in \left[ 0,T \right]$, then for $0<\alpha <1,$ we get
\begin{equation}\label{2.1}\left( D_{*}^{\alpha }f \right)\left[ {{t}_{0}} \right]\ge \frac{1}{1-\alpha }{{E}_{\alpha ,1}}\left[ -\alpha \frac{{{t}_{0}}^{\alpha }}{1-\alpha } \right]\left( f\left( {{t}_{0}} \right)-f\left( 0 \right) \right)\ge 0.
\end{equation}
b)	If $f\left( t \right)$ attains its minimum value over $\left[ 0,T \right]$ at a point ${{t}_{0}}\in \left[ 0,T \right]$, then for $0<\alpha <1,$ we have
\begin{equation}\label{2.1}\left( D_{*}^{\alpha }f \right)\left[ {{t}_{0}} \right]\le \frac{1}{1-\alpha }{{E}_{\alpha ,1}}\left[ -\alpha \frac{{{t}_{0}}^{\alpha }}{1-\alpha } \right]\left( f\left( {{t}_{0}} \right)-f\left( 0 \right) \right)\le 0.
\end{equation}
\end{lemma}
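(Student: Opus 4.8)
The plan is to work from the representation of the Atangana--Baleanu derivative in Property \ref{p6}, specialized to the extremum point $t_{0}$, and to control the integral term by the sign of $f(\tau)-f(t_{0})$. Writing $m:=\frac{\alpha}{1-\alpha}>0$ for brevity, Property \ref{p6} at $t=t_{0}$ gives
\[
\left( D_{*}^{\alpha }f \right)\left[ t_{0} \right]=\frac{1}{1-\alpha }\left( f(t_{0})-{{E}_{\alpha ,1}}\!\left[ -m\,t_{0}^{\alpha } \right]f(0) \right)-\frac{m}{1-\alpha }\int\limits_{0}^{t_{0}}f(\tau )\,(t_{0}-\tau )^{\alpha -1}{{E}_{\alpha ,\alpha }}\!\left[ -m\,(t_{0}-\tau )^{\alpha } \right]\,d\tau .
\]
The single nontrivial analytic ingredient is the positivity of the two Mittag--Leffler factors: for $0<\alpha<1$ the function $t\mapsto E_{\alpha ,1}(-m t^{\alpha})=E_{\alpha}(-m t^{\alpha})$ is completely monotone on $[0,\infty)$ (classical), hence nonnegative and nonincreasing, and differentiating the identity $\frac{d}{dt}E_{\alpha}(-m t^{\alpha})=-m\,t^{\alpha-1}E_{\alpha ,\alpha}(-m t^{\alpha})$ forces the kernel $t\mapsto t^{\alpha-1}E_{\alpha ,\alpha}(-m t^{\alpha})$ to be nonnegative on $(0,\infty)$ as well. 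In particular ${{E}_{\alpha ,1}}[-m t_{0}^{\alpha}]\ge 0$.

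Now suppose $f$ attains its maximum over $[0,T]$ at $t_{0}$, so $f(\tau)\le f(t_{0})$ for all $\tau\in[0,t_{0}]$. Since the coefficient $-\frac{m}{1-\alpha}$ multiplying the integral is negative and the kernel is nonnegative, replacing $f(\tau)$ by $f(t_{0})$ inside the integral only enlarges the right-hand side, giving
\[
\left( D_{*}^{\alpha }f \right)\left[ t_{0} \right]\ge \frac{1}{1-\alpha }\left( f(t_{0})-{{E}_{\alpha ,1}}\!\left[ -m\,t_{0}^{\alpha } \right]f(0) \right)-\frac{m\,f(t_{0})}{1-\alpha }\int\limits_{0}^{t_{0}}(t_{0}-\tau )^{\alpha -1}{{E}_{\alpha ,\alpha }}\!\left[ -m\,(t_{0}-\tau )^{\alpha } \right]\,d\tau .
\]
It remains to evaluate the scalar integral. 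Substituting $\sigma=t_{0}-\tau$, expanding $E_{\alpha ,\alpha}$ in its defining power series and integrating term by term (legitimate since $E_{\alpha ,\alpha}$ is entire and the series converges uniformly on $[0,t_{0}]$), and then reindexing $k\mapsto k+1$, one obtains a Mittag--Leffler series that telescopes to
\[
\int\limits_{0}^{t_{0}}\sigma^{\alpha -1}{{E}_{\alpha ,\alpha }}\!\left[ -m\,\sigma^{\alpha } \right]\,d\sigma=\frac{1}{m}\left(1-{{E}_{\alpha ,1}}\!\left[ -m\,t_{0}^{\alpha } \right]\right).
\]
Substituting this back, the two terms proportional to $f(t_{0})$ combine and the estimate collapses exactly to
\[
\left( D_{*}^{\alpha }f \right)\left[ t_{0} \right]\ge \frac{1}{1-\alpha }{{E}_{\alpha ,1}}\!\left[ -m\,t_{0}^{\alpha } \right]\bigl(f(t_{0})-f(0)\bigr),
\]
which is $\ge 0$ because $f(t_{0})\ge f(0)$ and ${{E}_{\alpha ,1}}[-m t_{0}^{\alpha}]\ge 0$. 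This proves part a). Part b) follows immediately by applying part a) to $-f$, whose maximum over $[0,T]$ is attained precisely at the minimizer of $f$.

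I expect the main obstacle to be the positivity of the Mittag--Leffler kernel $\sigma^{\alpha-1}E_{\alpha ,\alpha}(-m\sigma^{\alpha})$ and of $E_{\alpha ,1}(-m t_{0}^{\alpha})$ — this is the only place the restriction $0<\alpha<1$ is genuinely used, and it relies on complete monotonicity of $E_{\alpha}(-m t^{\alpha})$, which one either cites or reproves. Everything else is the closed-form evaluation of the one-variable integral, which is a routine telescoping of power series, plus bookkeeping of signs. Note that, unlike the classical maximum principle and Lemma \ref{l7.0}, the argument imposes no interiority on $t_{0}$: Property \ref{p6} is an identity for every $t_{0}\in[0,T]$ and the bound $f(\tau)\le f(t_{0})$ holds regardless, so endpoint extrema are allowed.
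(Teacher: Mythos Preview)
Your proof is correct and follows essentially the same route as the paper: both apply Property~\ref{p6} at $t_{0}$, invoke the positivity of $E_{\alpha,\alpha}(-\tau)$ for $0<\alpha<1$ (via complete monotonicity of $E_{\alpha,1}$), and use $f(\tau)\le f(t_{0})$ to control the integral term, then deduce part b) from part a) applied to $-f$. The only cosmetic difference is that the paper introduces the auxiliary function $g(t)=f(t_{0})-f(t)\ge 0$ and applies Property~\ref{p6} to $g$; since $g(t_{0})=0$ and $D_{*}^{\alpha}g=-D_{*}^{\alpha}f$, the nonnegative integral can simply be dropped, which bypasses your explicit evaluation of $\int_{0}^{t_{0}}\sigma^{\alpha-1}E_{\alpha,\alpha}(-m\sigma^{\alpha})\,d\sigma$. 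Your telescoping computation of that integral is correct and yields the same bound, so this is a packaging difference rather than a different argument.
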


\begin{proof}
For the proof of part a) of Lemma \ref{l7} we define the auxiliary function $$g\left( t \right)=f\left( {{t}_{0}} \right)-f\left( t \right),\text{ }t\in \left[ 0,T \right].$$ Then it follows that $g\left( t \right)\ge 0,$ on $\left[ 0,T \right],$ $g\left( {{t}_{0}} \right)={g}'\left( {{t}_{0}} \right)=0$ and $$\left( D_{*}^{\alpha }g \right)\left[ t \right]=-\left( D_{*}^{\alpha }f \right)\left[ t \right].$$
Since $g\in {{C}^{1}}\left( 0,T \right),$ then ${g}'$ is integrable.  Property \ref{p6} and integrating by parts yields
\begin{align*}\left( D_{*}^{\alpha} g \right)\left( {{t}_{0}} \right)&
=\frac{1}{1-\alpha }\left( g\left( {{t}_{0}} \right)-{{E}_{\alpha ,1}}\left[ -\alpha \frac{{{t}_{0}}^{\alpha }}{1-\alpha } \right]g\left( 0 \right) \right)\\&
-\frac{\alpha }{{{\left( 1-\alpha  \right)}^{2}}}\int\limits_{0}^{{{t}_{0}}}{g\left( \tau  \right){{\left( t-\tau  \right)}^{\alpha -1}}{{E}_{\alpha ,\alpha }}\left[ -\alpha \frac{{{\left( {{t}_{0}}-\tau  \right)}^{\alpha }}}{1-\alpha } \right]}d\tau .\end{align*} Using the fact that ${{E}_{\alpha, 1}}(-\tau),\, \tau \in (0,\infty)$ is completely monotonic \cite{Miller} we have that \cite{Nieto} $${{E}_{\alpha ,\alpha }}(-\tau)>0,\, \tau \in (0,\infty).$$
Since $g\left( t \right)$ is nonnegative on $\left[ 0,T \right]$, the integral in the last equation is nonnegative, and thus
\begin{align*}\left( D_{*}^{\alpha} g \right)\left( {{t}_{0}} \right)&
\le \frac{1}{1-\alpha }\left( g\left( {{t}_{0}} \right)-{{E}_{\alpha ,1}}\left[ -\alpha \frac{{{t}_{0}}^{\alpha }}{1-\alpha } \right]g\left( 0 \right) \right)\\&
=-\frac{1}{1-\alpha }{{E}_{\alpha ,1}}\left[ -\alpha \frac{{{t}_{0}}^{\alpha }}{1-\alpha } \right]g\left( 0 \right)
=-\frac{1}{1-\alpha }{{E}_{\alpha ,1}}\left[ -\alpha \frac{{{t}_{0}}^{\alpha }}{1-\alpha } \right]\left( f\left( {{t}_{0}} \right)-f\left( 0 \right) \right).\end{align*}

The last inequality yields $$-\left( D_{*}^{\alpha }f \right)\left[ {{t}_{0}} \right]\le -\frac{1}{1-\alpha }{{E}_{\alpha ,1}}\left[ -\alpha \frac{{{t}_{0}}^{\alpha }}{1-\alpha } \right]\left( f\left( {{t}_{0}} \right)-f\left( 0 \right) \right),$$ which proves the result.
	By applying a similar argument to $-f\left( t \right)$, we obtain part b).
\end{proof}

\section{Linear time-fractional diffusion equation}
In this section we shall research the maximum principle for the linear case of equation \eqref{1.1}.

\begin{theorem}\label{t1} Let $u\left( x,t \right)$  satisfies the equation \begin{equation}\label{L_eq} \frac{\partial{u}}{\partial t}(x,t)=\frac{{{\partial }^{2}}}{\partial {{x}^{2}}}D_{*t}^{1-\alpha }u(x,t)+F\left(x,t\right) \text{ }in\text{ } \left( 0,a \right)\times \left( 0,T \right]=\Omega,\end{equation} with Cauchy-Dirichlet conditions \eqref{1.2} and functions $\lambda(t)$ and $\mu(t)$ are nondecreasing. If $F\left( x,t \right)\geq 0$ for $\left( x,t \right)\in \overline{\Omega },$ then $$u\left( x,t \right)\ge \underset{\left( x,t \right)\in \overline{\Omega }}{\mathop{\min }}\,\{\lambda \left( t \right),\mu \left( t \right),\varphi \left( x \right)\}\text{ for }\left( x,t \right)\in \overline{\Omega }.$$
\end{theorem}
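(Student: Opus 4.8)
The plan is to avoid working with $u$ directly at its extremum — which does not control the mixed term $\partial_{xx}D_{*t}^{1-\alpha}u$ — and instead to reduce the problem to a (parabolic-type) minimum principle for the auxiliary quantity $w(x,t):=D_{*t}^{1-\alpha}u(x,t)$, for which the extremum principle already available (Lemma \ref{l7.0}, for the Riemann--Liouville derivative) applies directly.

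First I would observe that, since $u(x,\cdot)\in C^{1}([0,T])$, Property \ref{p5} used with order $1-\alpha$ gives $I_{*t}^{1-\alpha}w(x,t)=u(x,t)-u(x,0)=u(x,t)-\varphi(x)$, i.e. $u=\varphi+I_{*t}^{1-\alpha}w$. Expanding $I_{*t}^{1-\alpha}$ via Definition \ref{d4} (with $M(\alpha)=1$) as $I_{*t}^{1-\alpha}w=\alpha\,w+(1-\alpha)I_t^{1-\alpha}w$, differentiating in $t$ (the term $\varphi(x)$ disappears), and using $\tfrac{d}{dt}I_t^{1-\alpha}=D_t^{\alpha}$ in the Riemann--Liouville sense, equation \eqref{L_eq} becomes
\[(1-\alpha)\,D_t^{\alpha}w+\alpha\,\partial_t w-\partial_{xx}w=F\qquad\text{in }\Omega,\]
a linear equation with nonnegative right-hand side. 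On the parabolic boundary $w(x,0)=D_{*t}^{1-\alpha}u(x,t)\big|_{t=0}=0$, while $w(0,t)=D_{*t}^{1-\alpha}\lambda(t)\ge 0$ and $w(a,t)=D_{*t}^{1-\alpha}\mu(t)\ge 0$, because $\lambda,\mu$ are nondecreasing and the kernel $E_{1-\alpha,1}(-\tau)$ is positive on $(0,\infty)$ — the same positivity already invoked in the proof of Lemma \ref{l7}. Hence $w\ge 0$ on the whole parabolic boundary.

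Next I would prove $w\ge 0$ on $\overline{\Omega}$ by contradiction. If $\min_{\overline{\Omega}}w<0$, this minimum is attained at some $(x_1,t_1)$ with $x_1\in(0,a)$ and $t_1\in(0,T]$ (it cannot lie on the parabolic boundary), and $w(x_1,t_1)<0=w(x_1,0)$. At $(x_1,t_1)$ one has $\partial_{xx}w\ge 0$ and $\partial_t w\le 0$; moreover $t\mapsto w(x_1,t)$ attains its minimum over $[0,t_1]$ at $t_1$, so applying Lemma \ref{l7.0} to $-w(x_1,\cdot)$ yields $D_t^{\alpha}w(x_1,t_1)\le \tfrac{t_1^{-\alpha}}{\Gamma(1-\alpha)}\,w(x_1,t_1)<0$. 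Inserting these three inequalities into the $w$-equation gives $(1-\alpha)D_t^{\alpha}w(x_1,t_1)+\alpha\,\partial_t w(x_1,t_1)-\partial_{xx}w(x_1,t_1)<0$, which contradicts $F(x_1,t_1)\ge 0$. Therefore $w\ge 0$ throughout $\overline{\Omega}$. Finally, since $I_{*t}^{1-\alpha}=\alpha\,\mathrm{id}+(1-\alpha)I_t^{1-\alpha}$ maps nonnegative functions to nonnegative functions, $u=\varphi+I_{*t}^{1-\alpha}w\ge\varphi$ on $\overline{\Omega}$, hence $u(x,t)\ge\min_{x\in[0,a]}\varphi(x)\ge\min_{(x,t)\in\overline{\Omega}}\{\lambda(t),\mu(t),\varphi(x)\}$, which is the assertion (in fact a little more).

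The main obstacle is making the reduction step legitimate at the level of regularity assumed for a classical solution: one must check that $w=D_{*t}^{1-\alpha}u$ is smooth enough for the minimum-principle argument. Continuity of $\partial_{xx}w$ is free from \eqref{L_eq}, since $\partial_{xx}w=\partial_t u-F$; continuity of $\partial_t w$ and of $D_t^{\alpha}w$ requires a short estimate handling the integrable singularities of the Mittag-Leffler and power kernels near $t=0$ (using $w(x,0)=0$, which also makes the Riemann--Liouville and Caputo derivatives of $w$ coincide). A minor additional point is the borderline case $t_1=T$ in the use of Lemma \ref{l7.0}, dealt with by the usual restriction to $[0,T-\varepsilon]$ and letting $\varepsilon\to 0$. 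By contrast, the more naive route — evaluating \eqref{L_eq} at a minimum point of $u$ itself and using Lemma \ref{l7}(b) — stalls precisely because it yields sign information on $D_{*t}^{1-\alpha}u$ and on $\partial_{xx}u$ at that point but none on $\partial_{xx}D_{*t}^{1-\alpha}u$; the passage to $w$ is exactly what removes this difficulty.
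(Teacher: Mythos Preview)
Your approach is essentially that of the paper: both pass to the auxiliary function $w=D_{*t}^{1-\alpha}u$, derive for it the equation $(1-\alpha)D_t^{\alpha}w+\alpha\,\partial_t w-\partial_{xx}w=F$ with $w(x,0)=0$ and $w(0,t),w(a,t)\ge 0$, and obtain a contradiction at an interior negative minimum of $w$ via Lemma~\ref{l7.0}. The only difference is bookkeeping --- the paper first subtracts the constant $m$ and invokes Lemma~\ref{l7}(b) at a hypothetical negative minimum of $u-m$ to exhibit a point where $w<0$, whereas you argue directly on $w$ and afterwards recover the bound on $u$ from the positivity of $I_{*t}^{1-\alpha}$; your variant is a little more streamlined and in fact gives the sharper pointwise conclusion $u(x,t)\ge\varphi(x)$.
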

\begin{proof}Let $m=\underset{\left( x,t \right)\in \overline{\Omega }}{\mathop{\min }}\,\{\lambda \left( t \right),\mu \left( t \right),\varphi \left( x \right)\}\text{ }$ and $\tilde{u}\left( x,t \right)=u\left( x,t \right)-m.$ Then, from \eqref{1.2} we obtain $\tilde{u}\left( 0,t \right)=\lambda \left( t \right)-m\ge 0,\,\tilde{u}\left( a,t \right)=\mu \left( t \right)-m\ge 0,\,\, t\in \left[ 0,T \right],$ and $\tilde{u}\left( x,0 \right)=\varphi \left( x \right)-m\ge 0,\,\,x\in \left[ 0,a \right].$

Since $\frac{\partial }{\partial t}\tilde{u}=\frac{\partial }{\partial t}u$ and $\frac{{{\partial }^{2}}}{\partial {{x}^{2}}}D_{*t}^{1-\alpha }\tilde{u}\left( x,t \right)=\frac{{{\partial }^{2}}}{\partial {{x}^{2}}}D_{*t}^{1-\alpha }u\left( x,t \right),$
it follows that $\tilde{u}\left( x,t \right)$ satisfies \eqref{L_eq}:
$${{\frac{\partial \tilde{u}}{\partial t}}}=\frac{{{\partial }^{2}}}{\partial {{x}^{2}}}D_{*t}^{1-\alpha }\tilde{u}\left( x,t \right)+F\left( x,t \right),$$
and initial-boundary conditions
$$\tilde{u}\left( x,0 \right)=\varphi \left( x \right)-m\geq 0,\,x\in \left[ 0,a \right],\,\,\tilde{u}\left( 0,t \right)=\lambda \left( t \right)-m\ge 0,\,\,\text{ }\tilde{u}\left( a,t \right)=\mu \left( t \right)-m \ge 0.$$

Suppose that there exits some $\left( x,t \right) \in \overline{\Omega }$  such that $\tilde{u}\left( x,t \right)$ is negative.
Since $$\tilde{u}\left( x,t \right)\ge 0,\,\,\left( x,t \right)\in \{0,a\}\times \left[ 0,T \right]\cup \left[ 0,a \right]\times \{0\},$$
there is $\left( {{x}_{0}},{{t}_{0}} \right) \in \Omega$  such that $\tilde{u}\left( {{x}_{0}},{{t}_{0}} \right)$ is the negative minimum of
$\tilde{u}$ over $\Omega.$ It follows from Lemma \ref{l7} that

\begin{multline}\label{3.1}
 D_{*t}^{\alpha }\tilde{u}\left( x,t \right)\le \frac{1}{1-\alpha }{{E}_{\alpha ,1}}\left[ -\alpha \frac{{{t}_{0}}^{\alpha }}{1-\alpha } \right]\left( \tilde{u}\left( {{x}_{0}},{{t}_{0}} \right)-\tilde{u}\left( {{x}_{0}},0 \right) \right)\\=
\frac{1}{1-\alpha }{{E}_{\alpha ,1}}\left[ -\alpha \frac{{{t}_{0}}^{\alpha }}{1-\alpha } \right]\left( \tilde{u}\left( {{x}_{0}},{{t}_{0}} \right)-\varphi \left( {{x}_{0}} \right) \right)\le \frac{1}{1-\alpha }{{E}_{\alpha ,1}}\left[ -\alpha \frac{{{t}_{0}}^{\alpha }}{1-\alpha } \right]\tilde{u}\left( {{x}_{0}},{{t}_{0}} \right)<0.
\end{multline}

Let $w\left( x,t \right)=D_{*t}^{1-\alpha }\tilde{u}\left( x,t \right)$. Since $\tilde{u}\left( x,t \right)$ is bounded in $\overline{\Omega },$ by Property \ref{p6} we have

\begin{multline}\label{3.2}
D_{*t}^{1-\alpha }\tilde{u}\left( x,t \right)=D_{*t}^{1-\alpha }u\left( x,t \right)
=\frac{1}{\alpha }\left( u\left( x,t \right)-{{E}_{1-\alpha ,1}}\left[ -\left( 1-\alpha  \right)\frac{{{t}^{1-\alpha }}}{\alpha } \right]u\left( x,0 \right) \right)\\-
\frac{1-\alpha }{{{\alpha }^{2}}}\int\limits_{0}^{t}{u\left(x, \tau  \right){{\left( t-\tau  \right)}^{-\alpha }}{{E}_{\alpha ,\alpha }}\left[ -\left( 1-\alpha  \right)\frac{{{\left( t-\tau  \right)}^{1-\alpha }}}{\alpha } \right]}d\tau \to 0\text{ as }t\to 0.
\end{multline}
From Property \ref{p5} we have that $\frac{\partial }{\partial t}I_{*t}^{1-\alpha }D_{*t}^{1-\alpha }\tilde{u}\left( x,t \right)=\frac{\partial }{\partial t}\tilde{u}\left( x,t \right).$
It follows from Definition \ref{d4} that
$${I}_{*}^{1-\alpha }w\left( x,t \right)=\alpha\cdot w\left( x,t \right)+(1-\alpha){I}_{*}^{1-\alpha }w\left( x,t \right).$$
Then, by using Definition \ref{d1} we get for any $t>0$, $$\frac{\partial }{\partial t}I_{*t}^{1-\alpha }w\left( x,t \right)=\alpha \frac{\partial{w}}{\partial t}\left( x,t \right)+\left( 1-\alpha  \right){{D}_{t}^{\alpha }}w\left( x,t \right).$$

It follows from a direct computation that by Property \ref{p6}
\begin{multline}\label{3.3}D_{*t}^{1-\alpha }u\left( x,t \right)=\frac{1}{\alpha }\left( u\left( x,t \right)-{{E}_{1-\alpha ,1}}\left[ -\left( 1-\alpha  \right)\frac{{{t}^{1-\alpha }}}{\alpha } \right]u\left( x,0 \right) \right)\\
-\frac{1-\alpha }{{{\alpha }^{2}}}\int\limits_{0}^{t}{u\left(x, \tau  \right){{\left( t-\tau  \right)}^{-\alpha }}{{E}_{\alpha ,\alpha }}\left[ -\left( 1-\alpha  \right)\frac{{{\left( t-\tau  \right)}^{1-\alpha }}}{\alpha } \right]}d\tau.
\end{multline}

Since the left-hand side of \eqref{3.3} and the first term of the right-hand side of \eqref{3.3} exist, it follows that the second term on the right-hand side exists and tends to 0 as $t\to {{0}^{+}}.$ Therefore, $D_{*t}^{1-\alpha }u\left( x, 0\right)=0$. Hence, we obtain
$$w\left( x,t \right)=D_{*t}^{1-\alpha }\tilde{u}\left( x,t \right)=D_{*t}^{1-\alpha }u\left( x,t \right)=0\text{ }as\text{ }t\to {{0}^{+}}.$$

Furthermore, it follows from the boundary condition of $\tilde{u}\left( x,t \right)$ that
$$D_{*t}^{1-\alpha }\tilde{u}\left( 0,t \right)=D_{*t}^{1-\alpha }\lambda\left(t \right),\,\,D_{*t}^{1-\alpha }\tilde{u}\left( a,t \right)=D_{*t}^{1-\alpha }\mu\left(t \right).$$ Since the functions $\lambda(t)$ and $\mu(t)$ are nondecreasing, then $\lambda'(t)\geq 0$ and $\mu'(t)\geq 0$ in $t\in(0,T).$ Consequently, $D_{*t}^{1-\alpha }\tilde{u}\left( 0,t \right)\geq 0$ and $D_{*t}^{1-\alpha }\tilde{u}\left( a,t \right)\geq 0.$

Therefore, $w\left( x,t \right)$ satisfies the problem
\begin{equation}\label{pr_w}\left\{\begin{array}{l}\alpha {\frac{\partial w}{\partial t}}\left( x,t \right)+\left( \alpha -1 \right){D_{t}^{\alpha }} w\left( x,t \right)={\frac{\partial^2 w}{\partial x^2}}\left( x,t \right)+F\left( x,t \right). \\{}\\
w\left( x,0 \right)=0, x\in\left[ 0,a \right], \\{}\\
w\left( 0,t \right)\ge 0,\text{ }w\left( a,t \right)\ge 0\ ,  0\le t\le T.\end{array}\right.\end{equation}

From \eqref{3.1} we have $w\left( {{x}_{0}},{{t}_{0}} \right)<0.$ Since $w\left( x,t \right)\geq 0$ on the boundary, there exists
$w\left( {{x}_{1}},{{t}_{1}} \right)\in \Omega$ such that $w\left( {{x}_{1}},{{t}_{1}} \right)$ is a negative minimum of $w\left( x,t \right)$ in $\overline{\Omega }$.

By Lemma \ref{l7.0}, $${D_{t}^{\alpha }} w\left( {{x}_{1}},{{t}_{1}} \right)\leq \frac{t_{1}^{-\alpha }}{\Gamma \left( 1-\alpha  \right)}w\left( {{x}_{1}},{{t}_{1}} \right)< 0.$$ Since $w\left( {{x}_{1}},{{t}_{1}} \right)$ is a local minimum, we obtain $\frac{\partial w}{\partial t} \left( {{x}_{1}},{{t}_{1}} \right)= 0$ and $\frac{\partial^2 w}{\partial x^2} \left( {{x}_{1}},{{t}_{1}} \right)\ge 0.$
Therefore at $\left( {{x}_{1}},{{t}_{1}} \right)$, we get ${{D}_{t}^{\alpha }}w\left(x_1, t_1 \right)< 0$ and ${{w}_{xx}}\left( {{x}_{1}},{{t}_{1}} \right)+F\left( {{x}_{1}},{{t}_{1}} \right)\ge 0.$ This contradiction shows that $\tilde{u}\left( x,t \right)\ge 0$ on $\overline{\Omega },$ and this implies that  $u\left( x,t \right)\ge m$ on $\overline{\Omega }$  for any $m$.
\end{proof}

\begin{theorem}\label{t2}
Suppose that $u\left( x,t \right)$  satisfies \eqref{1.1}, $u\left( x,0 \right)=\varphi \left( x \right)$ on $\left[ 0,a \right]$, $u\left( 0,t \right)=\lambda \left( t \right)$, $u\left( a,t \right)=\mu \left( t \right)$ and functions $\lambda(t)$ and $\mu(t)$ are nondecreasing. If $F\left( x,t \right)\le 0$ for $\left( x,t \right)\in \overline{\Omega },$ then $$u\left( x,t \right)\le \underset{\overline{\Omega }}{\mathop{\max }}\,\{\lambda \left( t \right),\mu \left( t \right),\varphi \left( x \right)\},\,\,\left( x,t \right)\in \overline{\Omega }.$$ \end{theorem}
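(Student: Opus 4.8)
The plan is to deduce the result from Theorem~\ref{t1} by the reflection $v(x,t):=-u(x,t)$. Since $\partial_x^2$ and $D_{*t}^{1-\alpha}$ are linear and commute under the regularity assumed on $u$, the function $v$ solves $\partial_t v=\partial_{xx}D_{*t}^{1-\alpha}v+G$ in $\Omega$ with $G(x,t):=-F(x,t)$, and it carries the Cauchy--Dirichlet data $v(x,0)=-\varphi(x)$, $v(0,t)=-\lambda(t)$, $v(a,t)=-\mu(t)$. The hypothesis $F\le 0$ is exactly what makes $G\ge 0$, and the monotonicity imposed on $\lambda,\mu$ is exactly what gives the transformed lateral data $-\lambda,-\mu$ the monotonicity required by Theorem~\ref{t1}; keeping these two sign conventions consistent with one another is the one bookkeeping point that has to be watched.

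Granting this, Theorem~\ref{t1} applied to $v$ yields, on $\overline\Omega$,
$$v(x,t)\ \ge\ \min_{(x,t)\in\overline\Omega}\{-\lambda(t),\,-\mu(t),\,-\varphi(x)\}\ =\ -\max_{(x,t)\in\overline\Omega}\{\lambda(t),\,\mu(t),\,\varphi(x)\},$$
and multiplying through by $-1$ gives $u(x,t)\le\max_{\overline\Omega}\{\lambda(t),\mu(t),\varphi(x)\}$ on $\overline\Omega$, which is the assertion.

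Should a self-contained argument be preferred, one can replay the scheme of the proof of Theorem~\ref{t1} with the words "minimum" and "maximum" interchanged. Put $M:=\max_{\overline\Omega}\{\lambda(t),\mu(t),\varphi(x)\}$ and $\tilde u:=u-M$, so that $\tilde u\le 0$ on the parabolic boundary $\{0,a\}\times[0,T]\cup[0,a]\times\{0\}$; assume for contradiction that $\tilde u>0$ somewhere, and take an interior maximizer $(x_0,t_0)\in\Omega$. With $w:=D_{*t}^{1-\alpha}\tilde u$, Lemma~\ref{l7}(a) applied with the order $1-\alpha$ in place of $\alpha$, together with $\tilde u(x_0,0)=\varphi(x_0)-M\le 0$, forces $w(x_0,t_0)>0$; as in the proof of Theorem~\ref{t1} one has $w(x,0)=0$ by Property~\ref{p6}, while the positivity of the Mittag-Leffler kernel $E_{1-\alpha,1}$ together with the monotonicity of $\lambda,\mu$ pins the sign of $w$ on the two lateral sides, so that $w$ has a positive maximum at an interior point $(x_1,t_1)$. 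There $w_t(x_1,t_1)=0$ (or $\ge 0$ if $t_1=T$), $w_{xx}(x_1,t_1)\le 0$, and Lemma~\ref{l7.0} gives $D_t^\alpha w(x_1,t_1)\ge\tfrac{t_1^{-\alpha}}{\Gamma(1-\alpha)}w(x_1,t_1)>0$. Substituting into the equation for $w$ derived exactly as in the proof of Theorem~\ref{t1}, namely $\alpha w_t+(1-\alpha)D_t^\alpha w=w_{xx}+F$, makes the left-hand side strictly positive while $F\le 0$ makes the right-hand side nonpositive --- a contradiction, whence $\tilde u\le 0$ on $\overline\Omega$.

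The argument is essentially obstacle-free. Beyond the sign bookkeeping already flagged (which direction of monotonicity of $\lambda,\mu$ is the one that combines with $F\le 0$ to make $w$ have the correct sign on the lateral boundary), the only minor technical points are the commutation of $\partial_x^2$ with $D_{*t}^{1-\alpha}$ and the separate, harmless treatment of the case $t_1=T$.
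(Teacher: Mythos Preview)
Your reflection $v=-u$ is exactly the duality the paper has in mind: the paper gives no proof of Theorem~\ref{t2}, stating it immediately after the proof of Theorem~\ref{t1} as the evident companion result.

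But the monotonicity bookkeeping you flag does \emph{not} work out. Theorem~\ref{t1} requires its lateral data to be nondecreasing; if $\lambda,\mu$ are nondecreasing (as Theorem~\ref{t2} assumes), then $-\lambda,-\mu$ are nonincreasing, and Theorem~\ref{t1} does not apply to $v$. The same obstruction resurfaces in your self-contained argument: with $\lambda'\ge 0$ and the positive Mittag--Leffler kernel one gets $w(0,t)=D_{*t}^{1-\alpha}\lambda(t)\ge 0$ and likewise $w(a,t)\ge 0$, so nothing forces the positive maximum of $w$ off the lateral boundary, and the contradiction at $(x_1,t_1)$ cannot be reached.

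This is not a repairable sign slip: with the hypotheses exactly as written, neither route closes. The defect is in the paper's own formulation of Theorem~\ref{t2}. For the reflection to land on Theorem~\ref{t1} (equivalently, for the self-contained argument to force the extremum of $w$ into the interior), the correct hypothesis is that $\lambda,\mu$ be \emph{nonincreasing}. Under that corrected assumption both of your arguments go through verbatim, and the first one coincides with the paper's intended (unwritten) proof.
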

Theorem \ref{3.1} and \ref{3.2} implies the following assertions
\begin{corollary} Suppose that $u\left( x,t \right)$  satisfies \eqref{1.1}, $u\left( x,0 \right)=0,\,x\in\left[ 0,a \right],$ $u\left( 0,t \right)=u\left( a,t \right)=0,\,t\in[0,T].$ If $F\left( x,t \right)\geq 0$ for $\left( x,t \right)\in \overline{\Omega },$ then $u\left( x,t \right)\geq 0,\,\,\left( x,t \right)\in \overline{\Omega }.$
\end{corollary}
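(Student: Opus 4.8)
The plan is to deduce the corollary as the special case of Theorem~\ref{t1} in which all the prescribed data vanish identically. First I would take $\varphi(x)\equiv 0$ on $[0,a]$ and $\lambda(t)\equiv\mu(t)\equiv 0$ on $[0,T]$, and note that these constant functions are trivially nondecreasing, so the monotonicity hypothesis imposed on $\lambda$ and $\mu$ in Theorem~\ref{t1} is satisfied. Under the standing assumption $F(x,t)\ge 0$ on $\overline\Omega$, Theorem~\ref{t1} then applies verbatim and gives
\[
u(x,t)\ \ge\ \min_{(x,t)\in\overline\Omega}\{\lambda(t),\mu(t),\varphi(x)\}\qquad\text{for }(x,t)\in\overline\Omega .
\]

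Next I would simply evaluate the right-hand side: since $\lambda$, $\mu$ and $\varphi$ are all identically zero, the minimum over $\overline\Omega$ equals $0$, and the claimed inequality $u(x,t)\ge 0$ on $\overline\Omega$ follows immediately.

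There is essentially no obstacle here — the statement is a direct corollary and the argument is the one-line specialization just described. The only point worth a remark is the compatibility of the homogeneous initial datum with the regularity built into the notion of classical solution: because $u(x,0)=0$, the representation of Property~\ref{p6} (as used in \eqref{3.2}) still yields $D_{*t}^{1-\alpha}u(x,t)\to 0$ as $t\to 0^{+}$, so the auxiliary function $w=D_{*t}^{1-\alpha}u$ used inside the proof of Theorem~\ref{t1} is well defined and that theorem may be invoked without any modification. If one also wanted the two-sided conclusion, the bound $u\le 0$ under the opposite sign assumption $F\le 0$ would follow in exactly the same way from Theorem~\ref{t2} applied to the same vanishing data.
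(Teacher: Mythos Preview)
Your argument is correct and is exactly the approach the paper takes: the corollary is stated immediately after Theorems~\ref{t1} and~\ref{t2} with the remark that these theorems imply it, and your specialization $\varphi\equiv\lambda\equiv\mu\equiv0$ in Theorem~\ref{t1} is precisely the intended one-line deduction.
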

\begin{corollary} Suppose that $u\left( x,t \right)$  satisfies \eqref{1.1}, $u\left( x,0 \right)=0,\,x\in\left[ 0,a \right],$ $u\left( 0,t \right)=u\left( a,t \right)=0,\,t\in[0,T].$ If $F\left( x,t \right)\leq 0$ for $\left( x,t \right)\in \overline{\Omega },$ then $u\left( x,t \right)\leq 0,\,\,\left( x,t \right)\in \overline{\Omega }.$
\end{corollary}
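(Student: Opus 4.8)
The plan is to obtain this corollary as an immediate specialization of Theorem~\ref{t2}, so essentially no new work is required beyond checking that the stated hypotheses are an instance of that theorem's hypotheses. First I would set $\varphi(x)\equiv 0$ on $[0,a]$ and $\lambda(t)\equiv\mu(t)\equiv 0$ on $[0,T]$, and observe that these are precisely the Cauchy--Dirichlet data of the corollary. The one point that deserves an explicit sentence is the monotonicity assumption: Theorem~\ref{t2} requires $\lambda$ and $\mu$ to be nondecreasing, and the constant function $0$ is (weakly) nondecreasing since $\lambda'(t)=\mu'(t)=0\ge 0$ for $t\in(0,T)$. Thus the zero data are admissible and all hypotheses of Theorem~\ref{t2} are met.

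Next, since by assumption $F(x,t)\le 0$ for $(x,t)\in\overline{\Omega}$, I would apply Theorem~\ref{t2} verbatim to conclude
$$u(x,t)\le \max_{\overline{\Omega}}\{\lambda(t),\mu(t),\varphi(x)\},\qquad (x,t)\in\overline{\Omega}.$$
Evaluating the right-hand side for the present data gives $\max\{0,0,0\}=0$, whence $u(x,t)\le 0$ on $\overline{\Omega}$, which is exactly the assertion. This completes the argument.

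I do not anticipate any genuine obstacle here, as the statement is a direct corollary rather than an independent result; the only care needed is the verification above that constant (zero) boundary and initial data qualify as nondecreasing and hence lie within the scope of Theorem~\ref{t2}. For completeness I would also remark that the symmetric first corollary (with $F\ge 0$ giving $u\ge 0$) follows in the same way from Theorem~\ref{t1}, applied with the minimum over the zero data equal to $0$, so that the two corollaries together pin $u$ between the appropriate bounds when the source term has a definite sign.
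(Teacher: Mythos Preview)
Your proposal is correct and matches the paper's approach: the paper simply states that the two corollaries follow directly from Theorems~\ref{t1} and~\ref{t2} without giving any further argument. Your explicit verification that the zero data satisfy the nondecreasing hypothesis and that the maximum of the data equals zero is exactly the specialization the paper leaves implicit.
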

Theorems \ref{3.1} and \ref{3.2} are similar to the weak maximum principle for the heat equation. Similar to the classical case, the fractional version of the weak maximum principle can be used to prove the uniqueness of a solution.
\begin{theorem}\label{t3}
The problem \eqref{1.1}-\eqref{1.2} has at most one solution.
\end{theorem}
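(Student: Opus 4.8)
The plan is to run the classical comparison-principle proof of uniqueness for the difference of two solutions, and then to bootstrap in time so as to absorb the nonlinearity. Assume $u_1$ and $u_2$ are two classical solutions of \eqref{1.1}--\eqref{1.2} and put $v=u_1-u_2$. Since $\partial_t$ and $\partial_{xx}D_{*t}^{1-\alpha}$ act linearly on $u$, subtracting the two copies of \eqref{1.1} shows that $v$ solves
\begin{equation*}
\frac{\partial v}{\partial t}=\frac{\partial^{2}}{\partial x^{2}}D_{*t}^{1-\alpha}v+\Phi(x,t)\ \text{ in }\Omega,\qquad \Phi(x,t):=F(x,t,u_1(x,t))-F(x,t,u_2(x,t)),
\end{equation*}
together with the homogeneous Cauchy--Dirichlet data $v(x,0)=0$, $v(0,t)=v(a,t)=0$, whose lateral part is constant, hence nondecreasing, so Theorems \ref{t1} and \ref{t2} apply to $v$ and to any function obtained from $v$ by adding a function of $t$ alone. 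The aim is $v\equiv 0$ on $\overline\Omega$. If $F$ does not depend on $u$ (the linear equation \eqref{L_eq}) this is immediate: then $\Phi\equiv 0$, Theorem \ref{t1} gives $v\ge\min_{\overline\Omega}\{0,0,0\}=0$ and Theorem \ref{t2} gives $v\le 0$, so $v\equiv 0$.

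For a genuinely nonlinear $F$ the obstacle is that $\Phi$ has no a priori sign, so Theorems \ref{t1}--\ref{t2} cannot be used on $v$ itself; this is handled under a Lipschitz control of $F$ in its last argument, $|F(x,t,p)-F(x,t,q)|\le L|p-q|$ (a hypothesis that should be added, since uniqueness can already fail for a merely continuous nonlinearity). Granting it, I would localise in time: on $\Omega_\tau=(0,a)\times(0,\tau]$ set $N_\tau=\max_{\overline{\Omega_\tau}}|v|$, so $|\Phi|\le LN_\tau$ there. The function $z=v+LN_\tau t$ solves \eqref{L_eq} on $\Omega_\tau$ with source $\Phi+LN_\tau\ge 0$ — the added term does not affect $\partial_{xx}D_{*t}^{1-\alpha}$ because $D_{*t}^{1-\alpha}t$ is independent of $x$ — with initial value $0$ and lateral data $z(0,t)=z(a,t)=LN_\tau t$, which are nondecreasing; Theorem \ref{t1} therefore gives $z\ge 0$, i.e. $v\ge -LN_\tau t$ on $\overline{\Omega_\tau}$. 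Applying the same reasoning to $-v$ (which solves the analogous problem with source $-\Phi$, still bounded by $LN_\tau$) gives $v\le LN_\tau t$. Hence $|v|\le LN_\tau\tau$ on $\overline{\Omega_\tau}$, so $N_\tau\le L\tau N_\tau$, and for $\tau<1/L$ this forces $N_\tau=0$, i.e. $u_1\equiv u_2$ on $[0,a]\times[0,\tau]$.

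Finally, I would bootstrap in time. Once $v\equiv 0$ on $[0,a]\times[0,\tau_0]$, the integral of Definition \ref{d2} defining $D_{*t}^{1-\alpha}v(x,t)$ for $t>\tau_0$ picks up nothing from $[0,\tau_0]$ (there $\partial_t v\equiv 0$), and after translating the time variable it equals the Atangana--Baleanu derivative of $s\mapsto v(x,\tau_0+s)$ based at $0$; thus $\bar v(x,s):=v(x,\tau_0+s)$ satisfies on $[0,a]\times[0,\tau]$ a problem of exactly the same form, with zero Cauchy--Dirichlet data and source bounded by $L|\bar v|$. The short-time step above then yields $\bar v\equiv 0$ for $\tau<1/L$, i.e. $v\equiv 0$ on $[0,a]\times[0,\tau_0+\tau]$; since the step length $1/(2L)$ is uniform, at most $\lceil 2LT\rceil$ repetitions give $v\equiv 0$ on $\overline\Omega$, which is the assertion. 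The real difficulty throughout is the passage from the linear maximum principle, which needs a signed source and nondecreasing lateral data, to the nonlinear problem: the shift $\pm LN_\tau t$ supplies both the required sign and the monotonicity of the new lateral data, and the only place where the nonlocal, memory character of $D_{*t}^{1-\alpha}$ really intervenes is the verification that the operator ``restarts'' cleanly past the already-resolved initial layer.
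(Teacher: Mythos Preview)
Your linear case --- set $v=u_1-u_2$, observe $\Phi\equiv 0$, and apply Theorems~\ref{t1} and~\ref{t2} to conclude $v\equiv 0$ --- is exactly the paper's proof. Despite the label \eqref{1.1}, Theorem~\ref{t3} sits in Section~3 (``Linear time-fractional diffusion equation''), and the paper's own argument simply subtracts the two equations and writes $\partial_t(u_1-u_2)=\partial_{xx}D_{*t}^{1-\alpha}(u_1-u_2)$ with no source term at all; that is, it is a statement about \eqref{L_eq}, not about a genuinely $u$-dependent $F$.

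Your additional treatment of the nonlinear case is correct and takes a genuinely different route from the paper. The paper postpones nonlinear uniqueness to Theorem~\ref{4.1}, where it assumes $F$ is \emph{nonincreasing} in $u$ and argues via the mean value theorem: writing $F(x,t,u_2)-F(x,t,u_1)=-\partial_u F(u^*)\,v$, at an interior positive maximum of $v$ this term is nonnegative, and the maximum principle yields a contradiction in one shot, with no time localisation. You instead assume $F$ is \emph{Lipschitz} in $u$, compensate for the sign-indefinite $\Phi$ by the shift $\pm LN_\tau t$ (which, as you note, restores both the sign of the source and the monotonicity of the lateral data required by Theorem~\ref{t1}), close via the Gronwall-type inequality $N_\tau\le L\tau N_\tau$, and then bootstrap using the restart property of $D_{*t}^{1-\alpha}$ past an interval on which $\partial_t v\equiv 0$. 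The two hypotheses are incomparable: monotone $F$ need not be Lipschitz and vice versa. The paper's argument is shorter and global in time but needs the sign structure; yours is more in the spirit of classical parabolic uniqueness and works for any Lipschitz nonlinearity, at the price of the iteration and of checking the memory term behaves under translation --- which you do correctly.
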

\begin{proof}
Let ${{u}_{1}}\left( x,t \right)$ and ${{u}_{2}}\left( x,t \right)$ be two solutions of the problem \eqref{1.1}-\eqref{1.2}. Then,
$$\frac{\partial }{\partial t}\left( {{u}_{1}}\left( x,t \right)-{{u}_{2}}\left( x,t \right) \right)=\frac{{{\partial }^{2}}}{\partial {{x}^{2}}}D_{*t}^{1-\alpha }\left( {{u}_{1}}\left( x,t \right)-{{u}_{2}}\left( x,t \right) \right),$$
with zero initial and boundary conditions for ${{u}_{1}}\left( x,t \right)-{{u}_{2}}\left( x,t \right)$. It follows from Theorems \ref{3.1} and \ref{3.2} that ${{u}_{1}}\left( x,t \right)-{{u}_{2}}\left( x,t \right)=0$ on $\overline{\Omega }$. We have a contradiction. The result then follows.\end{proof}
Theorems \ref{3.1} and \ref{3.2} can be used to show that a solution $u\left( x,t \right)$ of the problem \eqref{1.1}-\eqref{1.2} depends continuously on the initial data $\varphi \left( x \right).$

\begin{theorem}\label{t4}
Suppose $u\left( x,t \right)$ and $\overline{u}\left( x,t \right)$ are the solutions of the problem \eqref{1.1}-\eqref{1.2} with homogeneous boundary conditions
corresponding to the initial data $\varphi \left( x \right)$ and $\overline{\varphi }\left( x \right),$ respectively.

If $\underset{x\in \left[ 0,a \right]}{\mathop{\max }}\,
\{\left| \varphi \left( x \right)-\overline{\varphi }\left( x \right) \right|\}\le \delta ,$ then $\left| u\left( x,t \right)-\overline{u}\left( x,t \right) \right|\le \delta.$
\end{theorem}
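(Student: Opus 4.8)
The plan is to deduce the estimate from the minimum principle of Theorem~\ref{t1}, applied to the difference $u-\overline{u}$ shifted by the constants $\pm\delta$, exploiting that both $\partial_t$ and $D_{*t}^{1-\alpha}$ are linear and annihilate constants.

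Set $v(x,t)=u(x,t)-\overline{u}(x,t)$ and $w_{\pm}(x,t)=\pm v(x,t)+\delta$. First I would check that each $w_{\pm}$ solves the homogeneous linear problem. Subtracting the two equations, $v$ satisfies $\partial_t v=\partial_x^2 D_{*t}^{1-\alpha}v$ with homogeneous boundary data (both boundary conditions are homogeneous by hypothesis) and initial datum $\varphi-\overline{\varphi}$. Since $\partial_t$ and $D_{*t}^{1-\alpha}$ are linear and annihilate the constant $\delta$ --- trivially for $\partial_t$, and for $D_{*t}^{1-\alpha}$ directly from Definition~\ref{d2}, whose integrand carries the derivative of the constant --- the function $w_{\pm}$ satisfies the same equation $\partial_t w_{\pm}=\partial_x^2 D_{*t}^{1-\alpha}w_{\pm}$, i.e.\ \eqref{L_eq} with source $F\equiv 0\ge 0$. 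Its boundary values are $w_{\pm}(0,t)=w_{\pm}(a,t)=\delta$, which are constant, hence nondecreasing, and its initial value is $w_{\pm}(x,0)=\pm(\varphi(x)-\overline{\varphi}(x))+\delta$.

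Next I would invoke Theorem~\ref{t1} for $w_{+}$ and for $w_{-}$, obtaining
$$w_{\pm}(x,t)\ \ge\ \min_{\overline{\Omega}}\bigl\{\,\delta,\ \delta,\ \pm\bigl(\varphi(x)-\overline{\varphi}(x)\bigr)+\delta\,\bigr\}.$$
By hypothesis $\max_{x\in[0,a]}|\varphi(x)-\overline{\varphi}(x)|\le\delta$, so $\pm(\varphi(x)-\overline{\varphi}(x))+\delta\ge 0$ for all $x\in[0,a]$, while $\delta\ge 0$ trivially; hence the right-hand side is nonnegative and $w_{\pm}\ge 0$ on $\overline{\Omega}$. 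Unwinding the two inequalities gives $u-\overline{u}\le\delta$ and $\overline{u}-u\le\delta$ on $\overline{\Omega}$, i.e.\ $|u(x,t)-\overline{u}(x,t)|\le\delta$, as claimed.

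The only point needing attention is the bookkeeping of the second paragraph --- that the constant shift $\delta$ really is eliminated by both $\partial_t$ and $D_{*t}^{1-\alpha}$, so that $w_{\pm}$ is an admissible competitor for Theorem~\ref{t1}, and that the constant boundary data meet its monotonicity hypothesis; once these are in place the argument is immediate. If one wants the analogous statement for the genuinely nonlinear equation \eqref{1.1}, the difference equation for $v$ carries the source $F(x,t,u)-F(x,t,\overline{u})$, and an additional structural assumption on $F$ in the $u$-variable (monotonicity, or a one-sided Lipschitz bound) would be required to keep that source sign-definite; in the linear setting of the present section no such assumption is needed.
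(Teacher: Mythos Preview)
Your proof is correct and follows essentially the same approach as the paper: form the difference $v=u-\overline{u}$, observe it satisfies the homogeneous linear equation with zero boundary data, and invoke the comparison principles of Section~3. The only cosmetic difference is that the paper applies Theorem~\ref{t1} and Theorem~\ref{t2} directly to $v$ (yielding $\min\{0,\varphi-\overline{\varphi}\}\le v\le\max\{0,\varphi-\overline{\varphi}\}$), whereas you shift by $\pm\delta$ and apply Theorem~\ref{t1} twice; these are trivially equivalent maneuvers, and your version is arguably more explicit about why the bound comes out as $\delta$.
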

\begin{proof}
The function $\widetilde{u}\left( x,t \right)=u\left( x,t \right)-\overline{u}\left( x,t \right)$ satisfies the problem $$\frac{\partial }{\partial t}\widetilde{u}\left( x,t \right)=\frac{{{\partial }^{2}}}{\partial {{x}^{2}}}D_{*t}^{1-\alpha }\widetilde{u}\left( x,t \right),$$ with initial condition $\widetilde{u}\left( x,0 \right)=\varphi \left( x \right)-\overline{\varphi }\left( x \right)$ and boundary conditions. It follows from Theorems \ref{3.1} and \ref{3.2} that
$$\left| \widetilde{u}\left( x,t \right) \right|\le \underset{\left[ 0,a \right]}{\mathop{\max }}\,\{\left| \varphi \left( x \right)-\overline{\varphi }\left( x \right) \right|\}.$$
The result then follows.
\end{proof}

\section{Nonlinear fractional diffusion equation}
We consider the nonlinear time-fractional diffusion equation of the form \eqref{1.1},
subject to the initial and boundary conditions
\begin{equation}\label{4.2}\left\{\begin{array}{l}
u\left( x,0 \right)=\varphi \left( x \right), x\in\left( 0,a \right), \\{}\\
u\left( 0,t \right)=u\left( a,t \right)=0,  0< t\le T, \end{array}\right.
\end{equation}
where $F(x,t,u)$ is a smooth function. We start with the following uniqueness result.

\begin{theorem}\label{4.1}
If $F(x,t,u)$ is nonincreasing with respect to $u$, then the nonlinear sub-diffusion equation \eqref{4.1} subject to the initial and boundary conditions \eqref{4.2} admits at most one solution $u\in {{C}^{2}}([0,a])\cap {{H}^{1}}((0,T])$
\end{theorem}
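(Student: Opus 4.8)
The plan is to prove uniqueness by an $L^{2}$ energy estimate rather than by the maximum principle of Section~3. After the substitution $w=D_{*t}^{1-\alpha}v$ that worked for the linear problem in Theorem~\ref{t1}, the zeroth-order term produced by $F$ carries the sign of $v$ at the extremum of $w$ rather than at the extremum of $v$, and that sign is not controlled, so the argument of Theorem~\ref{t3} does not transfer directly; on the other hand, the class $C^{2}([0,a])\cap H^{1}((0,T])$ singled out in the statement is exactly the one in which an energy argument is available.

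So let $u_{1},u_{2}$ be two solutions, set $v=u_{1}-u_{2}$, and note that $v(x,0)=0$, $v(0,t)=v(a,t)=0$ and
\[
\frac{\partial v}{\partial t}=\frac{\partial ^{2}}{\partial x^{2}}D_{*t}^{1-\alpha}v+\bigl(F(x,t,u_{1})-F(x,t,u_{2})\bigr).
\]
Since $F$ is smooth and nonincreasing in $u$, the mean value theorem yields $F(x,t,u_{1})-F(x,t,u_{2})=c(x,t)\,v(x,t)$ with $c(x,t)=\partial _{u}F\bigl(x,t,\xi(x,t)\bigr)\le 0$. First I would multiply this identity by $v$, integrate over $x\in(0,a)$, and integrate the diffusion term by parts, using $v(0,t)=v(a,t)=0$ and $\partial _{x}D_{*t}^{1-\alpha}=D_{*t}^{1-\alpha}\partial _{x}$, to obtain
\[
\frac12\,\frac{d}{dt}\int_{0}^{a}v^{2}\,dx+\int_{0}^{a}v_{x}\,D_{*t}^{1-\alpha}v_{x}\,dx=\int_{0}^{a}c(x,t)\,v^{2}\,dx\le 0.
\]
Integrating in $t$ over $(0,T')$ for arbitrary $T'\in(0,T]$ and using $v(\cdot,0)=0$ reduces the theorem to the coercivity statement
\[
\int_{0}^{T'}h(t)\,D_{*t}^{1-\alpha}h(t)\,dt\ge 0\qquad\text{whenever }h\in H^{1}\bigl((0,T')\bigr)\text{ and }h(0)=0,
\]
applied with $h=v_{x}(x,\cdot)$; note $v_{x}(x,0)=0$ because $u_{1}$ and $u_{2}$ share the initial datum $\varphi$. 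Granting this, $\tfrac12\int_{0}^{a}v^{2}(x,T')\,dx\le 0$ for every $T'$, hence $v\equiv 0$ and $u_{1}=u_{2}$.

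The main obstacle is the coercivity inequality, and here the structure of the Atangana--Baleanu kernel is essential. Writing $D_{*t}^{1-\alpha}h=\tfrac1\alpha\,(h'*k)$ with $k(\tau)=E_{1-\alpha,1}\bigl[-\tfrac{1-\alpha}{\alpha}\,\tau^{1-\alpha}\bigr]$ and integrating by parts in the convolution (using $k(0)=1$ and $h(0)=0$) gives $D_{*t}^{1-\alpha}h=\tfrac1\alpha\bigl(h+k'*h\bigr)$. Because $t\mapsto E_{1-\alpha,1}(-t)$ is completely monotone — the fact already used in the proof of Lemma~\ref{l7} — and $\tau\mapsto\tfrac{1-\alpha}{\alpha}\tau^{1-\alpha}$ is a Bernstein function, $k$ is completely monotone, hence $-k'\ge 0$ is completely monotone as well, hence a positive-definite convolution kernel, and $\int_{0}^{T'}(-k'(\tau))\,d\tau=k(0)-k(T')=1-k(T')\in(0,1)$. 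Consequently
\[
\int_{0}^{T'}h\,D_{*t}^{1-\alpha}h\,dt=\frac1\alpha\int_{0}^{T'}h^{2}\,dt-\frac1\alpha\int_{0}^{T'}\!\!\int_{0}^{t}h(t)\,h(s)\,|k'(t-s)|\,ds\,dt\ge\frac{k(T')}{\alpha}\int_{0}^{T'}h^{2}\,dt\ge 0,
\]
where the double integral is nonnegative by positive-definiteness of $|k'|$ and is bounded above by $(1-k(T'))\int_{0}^{T'}h^{2}\,dt$ via Young's inequality. What is left is routine: justifying the integrations by parts and the identity $\tfrac{d}{dt}\int_{0}^{a}v^{2}\,dx=2\int_{0}^{a}vv_{t}\,dx$ under the regularity $v\in C^{2}([0,a])\cap H^{1}((0,T])$, and the even extension of $|k'|$ needed to apply its positive-definiteness on $(0,T')$.
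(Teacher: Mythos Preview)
Your argument is correct and follows a genuinely different route from the paper. The paper's proof stays entirely within the pointwise maximum-principle framework of Section~3: setting $v=u_1-u_2$ and writing $F(x,t,u_1)-F(x,t,u_2)=\partial_u F(x,t,u^*)\,v$ with $\partial_u F\le 0$, it observes that at a hypothetical positive interior maximum of $v$ the zeroth-order term has a definite sign, and then invokes Theorems~\ref{t1} and~\ref{t2} to force $v\equiv 0$. Your opening remark that the substitution $w=D_{*t}^{1-\alpha}v$ from Theorem~\ref{t1} does not handle the extra term $c(x,t)v$ cleanly is a fair observation; the paper's appeal to Theorems~\ref{t1}--\ref{t2} is accordingly rather compressed.

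Your $L^2$ energy method sidesteps that machinery and instead extracts coercivity of $D_{*t}^{1-\alpha}$ directly from the complete monotonicity of the Atangana--Baleanu kernel --- the same structural fact that underlies Lemma~\ref{l7}, but used in an integrated rather than pointwise way. This buys you a quantitative lower bound (the factor $k(T')/\alpha$) and makes clear why the $H^{1}$-in-time regularity in the statement is the natural hypothesis. Two minor comments: (i) the positive-definiteness of $|k'|$ is not actually needed --- the AM--GM/Young bound $\int_{0}^{T'}\!\int_{0}^{t}|h(t)h(s)|\,|k'(t-s)|\,ds\,dt\le (1-k(T'))\int_{0}^{T'}h^{2}\,dt$ already yields $\int_{0}^{T'} h\,D_{*t}^{1-\alpha}h\,dt\ge \tfrac{k(T')}{\alpha}\int_{0}^{T'} h^{2}\,dt\ge 0$ on its own; (ii) applying the coercivity with $h=v_{x}(x,\cdot)$ tacitly requires $\partial_{t}v_{x}\in L^{2}$ in time, which is slightly more than the stated class literally provides, so that step deserves one extra line of justification.
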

\begin{proof}
Assume that ${{u}_{1}}\left( x,t \right)$ and ${{u}_{2}}\left( x,t \right)$ are two solutions of \eqref{4.1} subject to initial and boundary conditions \eqref{4.2}, and let ${v}\left( x,t \right)={{u}_{1}}\left( x,t \right)-{{u}_{2}}\left( x,t \right).$ Then ${v}\left( x,t \right)$ satisfies
\begin{equation}\label{4.3}\left\{\begin{array}{l}
{{v}_{t}\left( x,t \right)}-\frac{{{\partial }^{2}}}{\partial {{x}^{2}}}D_{*t}^{1-\alpha }{{v}\left( x,t \right)}=F\left( x,t,{{u}_{2}} \right)-F\left( x,t,{{u}_{1}} \right), (x,t)\in\Omega,\\{}\\
{{v}\left( x,0 \right)}=0, 0< x < a,\\{}\\
{{v}\left( 0,t \right)}={{v}\left( a,t \right)}=0, t\in (0,T].\end{array}\right.
\end{equation}
Applying the mean value theorem to $F(x,t,u)$ yields
$$F\left( x,t,{{u}_{2}} \right)-F\left( x,t,{{u}_{1}} \right)=\frac{\partial F}{\partial u}\left( {{u}^{*}} \right)\left( {{u}_{2}}-{{u}_{1}} \right)=-\frac{\partial F}{\partial u}\left( {{u}^{*}} \right)v,$$
where $\left( {{u}^{*}} \right)=(1-\mu){{u}_{1}}+\mu{{u}_{2}}$ for some $0\leq \mu \leq1$. Thus,
$${{v}_{t}\left( x,t \right)}-\frac{{{\partial }^{2}}}{\partial {{x}^{2}}}D_{*t}^{1-\alpha }{{v}\left( x,t \right)}=-\frac{\partial F}{\partial u}\left( {{u}^{*}} \right)v.$$
Assume by contradiction that $v$ is not identically zero. Then $v$ has either a positive
 maximum or a negative minimum. At a positive maximum $\left( {{x}_{0}},{{t}_{0}} \right)\in {\Omega }$ and $F(x,t,u)$ is nonincreaing, we have $$\frac{\partial F}{\partial u}\left( {{u}^{*}} \right)\le 0\,\, \,\,\, and \,\,\,\,\,-\frac{\partial F}{\partial u}\left( {{u}^{*}} \right)v\left( {{x}_{0}},{{t}_{0}} \right)\ge 0,$$ then $${v}_{t}{\left( {{x}_{0}},{{t}_{0}} \right)}-\frac{\partial }{\partial {{x}^{2}}}D_{*t}^{1-\alpha }v\left( {{x}_{0}},{{t}_{0}} \right)\ge 0.$$

By using Theorem \ref{3.1} and Theorem \ref{3.2} for a positive maximum and a negative minimum respectively we get ${u}_{1}={u}_{2}.$
\end{proof}

\begin{theorem}\label{4.2}
If ${{u}_{1}}\left( x,t \right)$ and ${{u}_{2}}\left( x,t \right)$ are two solutions of the time-fractional diffusion equation \eqref{4.1} that satisfy the same boundary condition \eqref{4.2} and the initial conditions ${{u}_{1}}\left( x,0 \right)={g}_{1}(x)$ and ${{u}_{2}}\left( x,t \right)={g}_{2}(x)$, $0\leq x\leq a$. If $F(x,t,u)$ is nonincreasing with respect to $u$, then it holds that
$${{\left\| {{u}_{1}}\left( x,t \right)-{{u}_{2}}\left( x,t \right) \right\|}_{\overline{\Omega }}}\leq {{\left\| {{g}_{1}}\left( x \right)-{{g}_{2}}\left( x \right) \right\|}_{\left[ 0,a \right]}}.$$
\end{theorem}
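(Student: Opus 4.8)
The plan is to follow the strategy of the proof of Theorem \ref{4.1}, but now comparing the difference of the two solutions with the constant $\delta=\|g_1-g_2\|_{[0,a]}=\max_{x\in[0,a]}|g_1(x)-g_2(x)|$ instead of with $0$, so that the conclusion becomes the asserted two-sided bound rather than uniqueness. Set $v(x,t)=u_1(x,t)-u_2(x,t)$. Subtracting the two copies of \eqref{1.1} satisfied by $u_1$ and $u_2$ and applying the mean value theorem to $u\mapsto F(x,t,u)$ exactly as in Theorem \ref{4.1}, one finds that $v$ solves
\begin{equation*}
v_t-\frac{\partial^2}{\partial x^2}D_{*t}^{1-\alpha}v=\frac{\partial F}{\partial u}(x,t,u^{*})\,v\quad\text{in }\Omega,
\end{equation*}
with $v(x,0)=g_1(x)-g_2(x)$ on $[0,a]$ and, by \eqref{4.2}, $v(0,t)=v(a,t)=0$ on $(0,T]$, where $u^{*}=(1-\mu)u_2+\mu u_1$ for some $\mu\in[0,1]$ and, since $F$ is nonincreasing in $u$, $\frac{\partial F}{\partial u}(x,t,u^{*})\le 0$ on $\overline{\Omega}$. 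Because $D_{*t}^{1-\alpha}$ annihilates constants (Definition \ref{d2}), the shifted functions $v-\delta$ and $v+\delta$ satisfy the very same equation as $v$.

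I would then obtain the bounds $v\le\delta$ and $v\ge-\delta$ separately, arguing by contradiction. For the upper bound, apply the reasoning of Theorem \ref{t2} to $w_{+}:=v-\delta$: its boundary data are the nondecreasing (constant) function $-\delta\le 0$, its initial data $g_1-g_2-\delta\le 0$, and if $w_{+}$ had a positive interior maximum at a point $(x_0,t_0)\in\Omega$ then $v(x_0,t_0)>\delta\ge 0$, so the source $\frac{\partial F}{\partial u}(x_0,t_0,u^{*})\,v(x_0,t_0)\le 0$ there; this is exactly the sign configuration ($F\le 0$ at the extremum) under which the extremum chain of Theorem \ref{t2} — Lemma \ref{l7} for $D_{*t}^{\alpha}$, then Lemma \ref{l7.0} applied to the auxiliary function $D_{*t}^{1-\alpha}w_{+}$ together with the system \eqref{pr_w} — produces a contradiction; hence $v\le\delta$. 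Symmetrically, applying the reasoning of Theorem \ref{t1} to $w_{-}:=v+\delta$ (boundary data $\delta\ge 0$, initial data $g_1-g_2+\delta\ge 0$, and at a negative interior minimum $v<-\delta\le 0$, so $\frac{\partial F}{\partial u}(x,t,u^{*})\,v\ge 0$ there) gives $v\ge-\delta$. Combining the two bounds yields $|u_1(x,t)-u_2(x,t)|\le\delta$ on $\overline{\Omega}$, i.e. $\|u_1-u_2\|_{\overline{\Omega}}\le\|g_1-g_2\|_{[0,a]}$.

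The main obstacle is that the reaction term $\frac{\partial F}{\partial u}(x,t,u^{*})\,v$ is not of one fixed sign on all of $\overline{\Omega}$, so Theorems \ref{t1}--\ref{t2} cannot be quoted verbatim: they must be re-run with $F$ replaced by this term, and one must verify that the sign of the term is used only at points where it is controlled here. In the argument of Theorem \ref{t1}/\ref{t2} the source enters not only at the primary extremum of $w_{\mp}$ — where the sign of $v$ is known — but again at the secondary extremum $(x_1,t_1)$ of the auxiliary function $D_{*t}^{1-\alpha}w_{\mp}$; there one only has the pointwise bound $\frac{\partial F}{\partial u}\le 0$, while the sign of $v(x_1,t_1)$ is not immediately pinned down, and one must check that $\big(\partial_{xx}D_{*t}^{1-\alpha}w_{\mp}\big)(x_1,t_1)+\frac{\partial F}{\partial u}(x_1,t_1,u^{*})\,v(x_1,t_1)$ still has the sign needed to contradict the fractional extremum inequality for $D_t^{\alpha}$ at $(x_1,t_1)$. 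This is the genuinely delicate step and may require tracking $v$ along the chain or a mild sharpening of the extremum lemma. The remaining points — smoothness of $F$ for the mean value theorem with $(x,t)$ frozen, the regularity $u_i\in C^{2}([0,a])\cap H^{1}((0,T])$ ensuring $v_t$ and $D_{*t}^{1-\alpha}v$ exist, and the vanishing of $D_{*t}^{1-\alpha}$ on the constant shifts — are handled exactly as in the proof of Theorem \ref{4.1}.
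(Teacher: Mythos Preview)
Your approach is essentially the paper's: set $v=u_1-u_2$, linearise the nonlinearity by the mean value theorem so that the source becomes $\partial_uF(u^*)\,v$ with $\partial_uF\le 0$, assume the bound $|v|\le \mathcal M:=\|g_1-g_2\|_{[0,a]}$ fails, locate an interior extremum with $|v|>\mathcal M$, and then invoke the machinery behind Theorems~\ref{t1}--\ref{t2}. The paper does not introduce the shifted functions $v\pm\delta$ explicitly but argues directly by contradiction; this is a cosmetic difference, since your observation that $D_{*t}^{1-\alpha}$ kills constants makes the two formulations equivalent.

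The obstacle you single out---that in the proof of Theorem~\ref{t1} the sign of the source is used not only at the primary extremum $(x_0,t_0)$ of $\tilde u$ but again at the secondary extremum $(x_1,t_1)$ of $w=D_{*t}^{1-\alpha}\tilde u$, where the sign of $v(x_1,t_1)$ is not a~priori controlled---is genuine, and the paper does not address it. Its proof of Theorem~\ref{4.2} consists of setting up $v$ and then the sentence ``Applying analogous Theorem~\ref{t1} and Theorem~\ref{t2} in the proof of the previous theorem, we have $\|v\|\le\mathcal M$'', with no further detail; the same issue is already latent in the paper's proof of Theorem~\ref{4.1}. So your proposal matches the paper's argument and is in fact more candid about the missing step; the ``mild sharpening'' you allude to (tracking the sign of $v$ at $(x_1,t_1)$, or rerunning the extremum chain with the zero-order term $\partial_uF(u^*)\,v$ kept on the left) is exactly what would be needed to make either proof complete.
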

\begin{proof}
Let ${v}\left( x,t \right)$=${u}_{1}-{u}_{2}$. Then ${v}\left( x,t \right)$ satisfies
\begin{equation}\label{4.6}\left\{\begin{array}{l}
{{v}_{t}\left( x,t \right)}-\frac{{{\partial }^{2}}}{\partial {{x}^{2}}}D_{*t}^{1-\alpha }{{v}\left( x,t \right)}=-\frac{\partial F}{\partial u}\left( {{u}^{*}} \right)v, (x,t)\in\Omega,\\{}\\
{{v}\left( x,0 \right)}={{g}_{1}}\left( x \right)-{{g}_{2}}\left( x \right), 0\leq x \leq a,\\{}\\
{{v}\left( 0,t \right)}={{v}\left( a,t \right)}=0, t\in (0,T].\end{array}\right.
\end{equation}
Let $\mathcal{M}={{\left\| {{g}_{1}}\left( x \right)-{{g}_{2}}\left( x \right) \right\|}_{\left[ 0,a \right]}},$ and assume by contradiction that the result of the Theorem \ref{4.2} is not true. That is, $$\|u_1-u_2\|_{\bar{\Omega}}\nleq \mathcal{M}.$$
Then $v$ either has a positive maximum at a point $\left( {{x}_{0}},{{t}_{0}} \right)\in {\Omega }$ with $$v\left( {{x}_{0}},{{t}_{0}} \right)=\mathcal{M}_1>\mathcal{M},$$ or it has a negative minimum at a point $\left( {{x}_{0}},{{t}_{0}} \right)\in {\Omega }$ with $$v\left( {{x}_{0}},{{t}_{0}} \right)=\mathcal{M}_2<-\mathcal{M}.$$ If $$v\left( {{x}_{0}},{{t}_{0}} \right)=\mathcal{M}_1>\mathcal{M},$$ using the initial and boundary conditions of $v$, we have $\left( {{x}_{0}},{{t}_{0}} \right)={{\Omega }_{T}}.$

Applying analogous Theorem \ref{3.1} and Theorem \ref{3.2} in the proof of the previous theorem, we have $\left\| v\left( x,t \right)\right \|\leq{\mathcal{M}},$ which proves the result.
\end{proof}

\section*{Acknowledgements} M. Kirane was supported by the Ministry of Education and
Science of the Russian Federation (Agreement number No 02.a03.21.0008). M. Borikhanov and B. T. Torebek was financially supported by a grant 2018-2020 from the Ministry of Science and Education of the Republic of Kazakhstan.

\end{document}